\newtheorem{theorem}{Theorem}
\theoremstyle{plain}
\newtheorem{definition}{Definition}
\newtheorem{lemma}{Lemma}
\newtheorem{remark}{Remark}
\numberwithin{equation}{section}
\begin{document}
\title[The Hermite-Hadamard's inequalities for $\varphi $-convex functions]{%
On Hermite-Hadamard Type Integral Inequalities for Functions Whose Second
Derivative are nonconvex }
\author{Mehmet Zeki SARIKAYA}
\address{Department of Mathematics, \ Faculty of Science and Arts, D\"{u}zce
University, D\"{u}zce-TURKEY}
\email{sarikayamz@gmail.com}
\author{Hakan Bozkurt}
\email{insedi@yahoo.com}
\author{Mehmet Ey\"{u}p KIRIS}
\address{Department of Mathematics, \ Faculty of Science and Arts, Afyon
Kocatepe University, Afyon-TURKEY}
\email{mkiris@gmail.com, kiris@aku.edu.tr}
\subjclass[2000]{ 26D07, 26D10, 26D99 }
\keywords{Hermite-Hadamard's inequalities, $\varphi $-convex functions, log-$%
\varphi $-convex, quasi-$\varphi $-convex, H\"{o}lder's inequality.}

\begin{abstract}
In this paper, we extend some estimates of the right hand side of a Hermite-
Hadamard type inequality for nonconvex functions whose second derivatives
absolute values are $\varphi $-convex, log-$\varphi $-convex, and quasi-$%
\varphi $-convex.
\end{abstract}

\maketitle

\section{Introduction}

It is well known that if $f$ is a convex function on the interval $I=\left[
a,b\right] $ and $a,b\in I$ with $a<b$, then%
\begin{equation}
f\left( \frac{a+b}{2}\right) \leq \frac{1}{b-a}\int\limits_{a}^{b}f\left(
x\right) dx\leq \frac{f\left( a\right) +f\left( b\right) }{2}  \label{H}
\end{equation}%
which is known as the Hermite-Hadamard inequality for the convex functions.
Both inequalities hold in the reversed direction if $f$ is concave. We note
that Hermite-Hadamard inequality may be regarded as a refinement of the
concept of convexity and it follows easily from Jensen's inequality.
Hermite-Hadamard inequality for convex functions has received renewed
attention in recent years and a remarkable variety of refinements and
generalizations have been found (see, for example, \cite{Alomari}-\cite%
{hassan}, \cite{ion}-\cite{sarikaya4}).

The following lemma was proved for twice differentiable mappings in \cite%
{Dragomir2}:

\begin{lemma}
\label{l1}Let $f:I\subset 
\mathbb{R}
\rightarrow 
\mathbb{R}
$ be a twice differentiable mapping on $I^{o}$, $a,b\in I$ with $a<b$ and $%
f^{\prime \prime }$ of integrable on $[a,b]$, the following equality holds:%
\begin{equation*}
\frac{f(a)+f(b)}{2}+\frac{1}{b-a}\int_{a}^{b}f\left( x\right) dx=\frac{%
\left( b-a\right) ^{2}}{2}\int_{0}^{1}t\left( 1-t\right) f\left( ta+\left(
1-t\right) b\right) dt.
\end{equation*}%
A simple proof of this equality can be also done by twice integrating by
parts in the right hand side.
\end{lemma}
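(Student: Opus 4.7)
The statement is exactly the kind of identity that falls out of two integrations by parts, as the author already hints. My plan is therefore to start from the right-hand side and peel off the derivatives one at a time, converting $f''$ into $f'$ and then $f'$ into $f$, and finally turn the resulting $\int_0^1 f(ta+(1-t)b)\,dt$ into $\frac{1}{b-a}\int_a^b f(x)\,dx$ by the affine substitution $x = ta+(1-t)b$ (so $dx = (a-b)\,dt$).

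More concretely, for the first integration by parts I would set $u(t) = t(1-t)$ and $dv = f''(ta+(1-t)b)\,dt$, so that $du = (1-2t)\,dt$ and $v = \tfrac{1}{a-b}f'(ta+(1-t)b)$. The key point to notice here is that the boundary term $[uv]_0^1$ vanishes identically because $t(1-t)$ is zero at both endpoints; this is exactly why $t(1-t)$ is the right weight to use. What remains is $-\tfrac{1}{a-b}\int_0^1(1-2t)f'(ta+(1-t)b)\,dt$.

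For the second integration by parts I would take $u = 1-2t$ and $dv = f'(ta+(1-t)b)\,dt$, which gives $du = -2\,dt$ and $v = \tfrac{1}{a-b}f(ta+(1-t)b)$. This time the boundary term does contribute and produces $\tfrac{f(a)+f(b)}{a-b}$ (after evaluating at $t=1$ and $t=0$), while the remaining integral $\tfrac{2}{a-b}\int_0^1 f(ta+(1-t)b)\,dt$ becomes $\tfrac{2}{(a-b)(b-a)}\int_a^b f(x)\,dx$ after the substitution mentioned above. Multiplying the accumulated expression by $\tfrac{(b-a)^2}{2}$ then yields the claimed identity.

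The only real obstacle is bookkeeping: one must be careful with the sign of $v = \tfrac{1}{a-b}\,F$ (note $a-b = -(b-a)$) at each stage, and with the orientation of the limits after the $t \mapsto x$ substitution, since a sign error in either step would spoil the clean cancellation that produces $\tfrac{f(a)+f(b)}{2} - \tfrac{1}{b-a}\int_a^b f(x)\,dx$ on the left. Aside from that, the computation is entirely routine, and no hypothesis beyond integrability of $f''$ (to justify the two integrations by parts on a compact interval) is needed.
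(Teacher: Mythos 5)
Your argument is exactly the one the paper intends: the paper offers no proof of its own beyond the remark that the identity follows by integrating by parts twice on the right-hand side (citing Dragomir--Pearce), and your two integrations by parts with $u=t(1-t)$ and then $u=1-2t$, followed by the substitution $x=ta+(1-t)b$, carry that out correctly. Two caveats, though. First, what your computation actually establishes is the corrected identity $\frac{f(a)+f(b)}{2}-\frac{1}{b-a}\int_a^b f(x)\,dx=\frac{(b-a)^2}{2}\int_0^1 t(1-t)\,f''(ta+(1-t)b)\,dt$; the lemma as printed contains two typos (a $+$ that should be a $-$ on the left, and $f$ in place of $f''$ in the integrand), both of which you have silently repaired --- worth saying explicitly, since the identity as literally stated is false. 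Second, the boundary term in your second integration by parts is $\left[(1-2t)\tfrac{1}{a-b}f(ta+(1-t)b)\right]_0^1=-\tfrac{f(a)+f(b)}{a-b}$, not $\tfrac{f(a)+f(b)}{a-b}$; taken at face value that intermediate sign would flip the $\tfrac{f(a)+f(b)}{2}$ term in the conclusion, although the final identity you state is the correct one.
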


In \cite{hassan}, by using Lemma \ref{l1}, Hussain et al. proved some
inequalities related to Hermite-Hadamard's inequality for $s$-convex
functions:

\begin{theorem}
Let $f:I\subset \lbrack 0,\infty )\rightarrow \mathbb{R}$ be twice
differentiable mapping on $I^{\circ }$ such that $f^{\prime \prime }\in L_{1}%
\left[ a,b\right] ,$ where $a,b\in I$ with $a<b.$ If $\left\vert f^{\prime
\prime }\right\vert $ is $s-$convex on $\left[ a,b\right] $ for some fixed $%
s\in \lbrack 0,1]$ and $q\geq 1,$ then the following inequality holds:%
\begin{equation}
\begin{array}{l}
\left\vert \dfrac{f(a)+f(b)}{2}-\dfrac{1}{b-a}\dint_{a}^{b}f(x)dx\right\vert
\leq \dfrac{\left( b-a\right) ^{2}}{2\times 6^{\frac{1}{p}}}\left[ \dfrac{%
\left\vert f^{\prime \prime }(a)\right\vert ^{q}+\left\vert f^{\prime \prime
}(b)\right\vert ^{q}}{(s+2)(s+3)}\right] ^{\frac{1}{q}},%
\end{array}
\label{H3}
\end{equation}%
where $\frac{1}{p}+\frac{1}{q}=1.$
\end{theorem}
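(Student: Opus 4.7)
The plan is to start from the identity in Lemma \ref{l1} (reading it with the natural sign/derivative correction so that the integrand involves $t(1-t)f''(ta+(1-t)b)$, as is standard for this identity), take absolute values, and then apply the power-mean form of H\"{o}lder's inequality to the integral
\begin{equation*}
\int_{0}^{1}t(1-t)\left\vert f^{\prime \prime }(ta+(1-t)b)\right\vert \,dt.
\end{equation*}
This is the only reasonable route because the prefactor $1/6^{1/p}$ in the target estimate is exactly $\left(\int_{0}^{1}t(1-t)\,dt\right)^{1/p}=(1/6)^{1/p}$, which is the footprint of a power-mean split of the weight $t(1-t)$ into $(t(1-t))^{1/p}\cdot (t(1-t))^{1/q}$.

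First, I would write $t(1-t)=\left(t(1-t)\right)^{1/p+1/q}$ and apply H\"{o}lder to obtain
\begin{equation*}
\int_{0}^{1}t(1-t)\left\vert f^{\prime \prime }(ta+(1-t)b)\right\vert dt\leq \left(\int_{0}^{1}t(1-t)\,dt\right)^{1/p}\left(\int_{0}^{1}t(1-t)\left\vert f^{\prime \prime }(ta+(1-t)b)\right\vert ^{q}dt\right)^{1/q}.
\end{equation*}
The first factor is $(1/6)^{1/p}$, so the $1/6^{1/p}$ and the $(b-a)^{2}/2$ in the claimed bound are accounted for.

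Next, I would invoke the hypothesis that $\left\vert f^{\prime \prime }\right\vert $ is $s$-convex (hence $\left\vert f^{\prime \prime }\right\vert ^{q}$ dominates $t^{s}\left\vert f^{\prime \prime }(a)\right\vert ^{q}+(1-t)^{s}\left\vert f^{\prime \prime }(b)\right\vert ^{q}$ along the segment) to bound the inner integral by
\begin{equation*}
\left\vert f^{\prime \prime }(a)\right\vert ^{q}\int_{0}^{1}t^{s+1}(1-t)\,dt+\left\vert f^{\prime \prime }(b)\right\vert ^{q}\int_{0}^{1}t(1-t)^{s+1}dt.
\end{equation*}
Both integrals equal the Beta value $B(s+2,2)=\frac{1}{(s+2)(s+3)}$, so the bracket collapses to $\frac{\left\vert f^{\prime \prime }(a)\right\vert ^{q}+\left\vert f^{\prime \prime }(b)\right\vert ^{q}}{(s+2)(s+3)}$, and assembling the three factors yields \eqref{H3}.

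The only non-routine step is recognizing the correct H\"{o}lder split: a naive application with the weight $t(1-t)$ attached entirely to one factor would produce $\int_{0}^{1}(t(1-t))^{p}dt$ and the wrong constant. The trick, which the form of the target constant essentially forces, is to distribute $t(1-t)$ as $(t(1-t))^{1/p}(t(1-t))^{1/q}$ so that the $s$-convexity estimate lands on an integrand already weighted by $t(1-t)$; everything after that is just evaluation of two Beta integrals.
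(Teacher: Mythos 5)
Your overall route is the intended one. The paper itself does not prove this theorem---it is quoted from the reference \cite{hassan} with only the remark that it follows from Lemma \ref{l1}---so there is no in-paper argument to compare against; but your reconstruction (correct the sign and the missing $f''$ in Lemma \ref{l1}, take absolute values, split the weight as $t(1-t)=(t(1-t))^{1/p}(t(1-t))^{1/q}$ so that H\"{o}lder produces the factor $(1/6)^{1/p}$, then evaluate the two Beta integrals $\int_{0}^{1}t^{s+1}(1-t)\,dt=\int_{0}^{1}t(1-t)^{s+1}\,dt=\frac{1}{(s+2)(s+3)}$) is exactly the standard proof and reproduces the constant in (\ref{H3}).

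There is, however, one step that does not follow as you justify it. From the $s$-convexity of $\left\vert f''\right\vert$ you get $\left\vert f''(ta+(1-t)b)\right\vert \leq t^{s}\left\vert f''(a)\right\vert +(1-t)^{s}\left\vert f''(b)\right\vert$, but raising this to the power $q$ does \emph{not} give $\left\vert f''(ta+(1-t)b)\right\vert ^{q}\leq t^{s}\left\vert f''(a)\right\vert ^{q}+(1-t)^{s}\left\vert f''(b)\right\vert ^{q}$ (and your word ``dominates'' points the inequality the wrong way besides). For $s<1$ the weights satisfy $t^{s}+(1-t)^{s}>1$, so convexity of $x\mapsto x^{q}$ only yields
\begin{equation*}
\bigl(t^{s}\left\vert f''(a)\right\vert +(1-t)^{s}\left\vert f''(b)\right\vert \bigr)^{q}\leq \bigl(t^{s}+(1-t)^{s}\bigr)^{q-1}\bigl(t^{s}\left\vert f''(a)\right\vert ^{q}+(1-t)^{s}\left\vert f''(b)\right\vert ^{q}\bigr),
\end{equation*}
with an extraneous factor that can be as large as $2^{(1-s)(q-1)}$. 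The clean fix is that the hypothesis should be (and in the original source effectively is) that $\left\vert f''\right\vert ^{q}$ is $s$-convex; with that reading the inequality you need holds by definition and the rest of your computation closes the proof. As literally stated, with the hypothesis on $\left\vert f''\right\vert$ alone and $q>1$, your argument has a genuine gap at this one point.
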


\begin{remark}
If we take $s=1$ in (\ref{H3}), then we have%
\begin{equation*}
\begin{array}{l}
\left\vert \dfrac{f(a)+f(b)}{2}-\dfrac{1}{b-a}\dint_{a}^{b}f(x)dx\right\vert
\leq \dfrac{\left( b-a\right) ^{2}}{12}\left[ \dfrac{\left\vert f^{\prime
\prime }(a)\right\vert ^{q}+\left\vert f^{\prime \prime }(b)\right\vert ^{q}%
}{2}\right] ^{\frac{1}{q}}.%
\end{array}%
\end{equation*}%
We recall that the notion of quasi-convex functions generalizes the notion
of convex functions. More precisely, a function $f:[a,b]\subset \mathbb{R}%
\rightarrow \mathbb{R}$ is said quasi-convex on $[a,b]$ if 
\begin{equation*}
f(tx+(1-t)y)\leq \sup \left\{ f(x),f(y)\right\}
\end{equation*}%
for all $x,y\in \lbrack a,b]$ and $t\in \left[ 0,1\right] .$ Clearly, any
convex function is a quasi-convex function. Furthermore, there exist
quasi-convex functions which are not convex (see \cite{ion}).
\end{remark}

Alomari, Darus and Dragomir in \cite{Alomari} introduced the following
theorems for twice differentiable quasiconvex functions:

\begin{theorem}
Let $f:I\subseteq \mathbb{R}\rightarrow \mathbb{R}$ be a twice
differentiable function on $I^{o}$, $a,b\in I^{o}$ with $a<b$ and $f^{\prime
\prime }$ is integrable on $[a,b]$. If $|f^{\prime \prime }|$ is quasiconvex
on $[a,b]$, then the following inequality holds%
\begin{equation*}
\left\vert \frac{f(a)+f(b)}{2}-\frac{1}{b-a}\int_{a}^{b}f\left( x\right)
dx\right\vert \leq \frac{\left( b-a\right) ^{2}}{12}\max \left\{ \left\vert
f^{\prime \prime }\left( a\right) \right\vert ,\left\vert f^{\prime \prime
}\left( b\right) \right\vert \right\} .
\end{equation*}
\end{theorem}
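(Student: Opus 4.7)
The plan is to apply Lemma \ref{l1} directly and then exploit quasi-convexity of $|f''|$ pointwise on the segment from $a$ to $b$. First I would invoke the identity of Lemma \ref{l1} (with the sign and second derivative corrected),
\begin{equation*}
\frac{f(a)+f(b)}{2}-\frac{1}{b-a}\int_{a}^{b}f(x)\,dx = \frac{(b-a)^{2}}{2}\int_{0}^{1} t(1-t)\, f''(ta+(1-t)b)\,dt,
\end{equation*}
and take absolute values on both sides, moving the modulus inside the integral on the right by the triangle inequality. Since the weight $t(1-t)$ is nonnegative on $[0,1]$, this yields
\begin{equation*}
\left|\frac{f(a)+f(b)}{2}-\frac{1}{b-a}\int_{a}^{b}f(x)\,dx\right| \leq \frac{(b-a)^{2}}{2}\int_{0}^{1} t(1-t)\,|f''(ta+(1-t)b)|\,dt.
\end{equation*}

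Next I would use the quasi-convexity hypothesis on $|f''|$: for every $t\in[0,1]$, the point $ta+(1-t)b$ lies in $[a,b]$, and therefore
\begin{equation*}
|f''(ta+(1-t)b)| \leq \max\{|f''(a)|,|f''(b)|\}.
\end{equation*}
This upper bound is independent of $t$, so it factors out of the integral in the previous display.

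Finally, a one-line elementary computation gives $\int_{0}^{1} t(1-t)\,dt = \tfrac{1}{6}$, which combined with the prefactor $(b-a)^{2}/2$ produces the desired bound $\frac{(b-a)^{2}}{12}\max\{|f''(a)|,|f''(b)|\}$. The entire argument uses only Lemma \ref{l1}, the definition of quasi-convexity, and that single integral evaluation, so I anticipate no real obstacle; the only thing to watch out for is that Lemma \ref{l1} must be applied in its correct form (with a minus sign and $f''$ in the integrand), as the typographical version stated in the excerpt is not what its proof actually delivers.
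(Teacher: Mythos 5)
Your proof is correct and is essentially the same argument the paper uses throughout for its $\varphi$-convex generalizations (the paper itself only quotes this particular theorem from Alomari, Darus and Dragomir without reproving it): apply the identity of Lemma \ref{l1}, bound $|f''(ta+(1-t)b)|$ by $\max\{|f''(a)|,|f''(b)|\}$ via quasi-convexity, and evaluate $\int_0^1 t(1-t)\,dt=\tfrac{1}{6}$. You were also right to flag that the displayed form of Lemma \ref{l1} contains typographical errors (the sign and the missing $f''$ in the integrand) and to work with the corrected identity.
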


\begin{theorem}
Let $f:I\subseteq \mathbb{R}\rightarrow \mathbb{R}$ be a twice
differentiable function on $I^{o}$, $a,b\in I^{o}$ with $a<b$ and $f^{\prime
\prime }$ is integrable on $[a,b]$. If $|f^{\prime \prime }|^{\frac{p}{p-1}}$
is a quasiconvex on $[a,b]$, for $p>1$, then the following inequality holds%
\begin{eqnarray*}
&&\left\vert \frac{f(a)+f(b)}{2}-\frac{1}{b-a}\int_{a}^{b}f\left( x\right)
dx\right\vert \\
&& \\
&\leq &\frac{\left( b-a\right) ^{2}}{8}\left( \frac{\sqrt{\pi }}{2}\right) ^{%
\frac{1}{p}}\left( \frac{\Gamma \left( 1+p\right) }{\Gamma \left( \frac{3}{2}%
+p\right) }\right) ^{\frac{1}{p}}\left( \max \left\{ \left\vert f^{\prime
\prime }\left( a\right) \right\vert ^{q},\left\vert f^{\prime \prime }\left(
b\right) \right\vert ^{q}\right\} \right) ^{\frac{1}{q}},
\end{eqnarray*}%
where $\frac{1}{p}+\frac{1}{q}=1.$
\end{theorem}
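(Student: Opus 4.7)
The plan is to start from the integral identity in Lemma~\ref{l1} (the integrand on the right-hand side is $t(1-t)f''(ta+(1-t)b)$), pass to absolute values, and then split the factor $t(1-t)\,|f''(ta+(1-t)b)|$ via H\"older's inequality with conjugate exponents $p$ and $q=p/(p-1)$. This produces
\[
\left|\frac{f(a)+f(b)}{2}-\frac{1}{b-a}\int_a^b f(x)\,dx\right|
\le \frac{(b-a)^2}{2}\left(\int_0^1 [t(1-t)]^p\,dt\right)^{\!1/p}\!\left(\int_0^1 |f''(ta+(1-t)b)|^q\,dt\right)^{\!1/q}.
\]

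The second factor is handled by the quasiconvexity hypothesis: for every $t\in[0,1]$ the pointwise bound $|f''(ta+(1-t)b)|^q \le \max\{|f''(a)|^q,|f''(b)|^q\}$ holds, so the factor is at most $\bigl(\max\{|f''(a)|^q,|f''(b)|^q\}\bigr)^{1/q}$. This same bound also ensures $|f''|^q$ is essentially bounded on $[a,b]$, legitimizing the H\"older step.

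The first factor is the beta integral
\[
\int_0^1 [t(1-t)]^p\,dt \;=\; B(p+1,p+1) \;=\; \frac{\Gamma(p+1)^2}{\Gamma(2p+2)}.
\]
Here I apply Legendre's duplication formula at $z=p+1$,
\[
\Gamma(2p+2) \;=\; \frac{2^{2p+1}}{\sqrt{\pi}}\,\Gamma(p+1)\,\Gamma\!\left(p+\tfrac{3}{2}\right),
\]
which reduces $B(p+1,p+1)$ to $\sqrt{\pi}\,\Gamma(p+1)\bigl/\bigl(2^{2p+1}\Gamma(p+3/2)\bigr)$. Taking the $1/p$-th power extracts the factor $\tfrac{1}{4}\bigl(\tfrac{\sqrt{\pi}}{2}\bigr)^{1/p}$, and combined with the $(b-a)^2/2$ in front this yields exactly the constant $\tfrac{(b-a)^2}{8}\bigl(\tfrac{\sqrt{\pi}}{2}\bigr)^{1/p}\bigl(\tfrac{\Gamma(1+p)}{\Gamma(3/2+p)}\bigr)^{1/p}$ stated in the theorem.

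The only real obstacle is this gamma-function algebra: one must recognize that the beta integral has to be pushed through the duplication formula to match the precise shape claimed in the theorem. The remaining steps -- the identity from Lemma~\ref{l1}, the H\"older split, and the use of quasiconvexity -- are essentially a reprise of the preceding theorem (the $p=1$ case), with H\"older inserted before the quasiconvexity bound is invoked.
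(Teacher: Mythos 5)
Your proof is correct and follows exactly the standard route: the paper itself only quotes this theorem from \cite{Alomari} without reproving it, but your argument (the identity of Lemma \ref{l1}, the H\"older split with exponents $p$ and $q=p/(p-1)$, the pointwise quasiconvexity bound, and the evaluation $\int_0^1(t-t^2)^p\,dt=2^{-1-2p}\sqrt{\pi}\,\Gamma(p+1)/\Gamma(p+\tfrac32)$ via the duplication formula) is precisely the method the paper uses in Section 3 for its $\varphi$-convex and quasi-$\varphi$-convex analogues. The only cosmetic point is that the sign in Lemma \ref{l1} as printed is a typo (it should be a difference, not a sum, on the left-hand side), which you have implicitly corrected.
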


\begin{theorem}
Let $f:I\subseteq \mathbb{R}\rightarrow \mathbb{R}$ be a twice
differentiable function on $I^{o}$, $a,b\in I^{o}$ with $a<b$ and $f^{\prime
\prime }$ is integrable on $[a,b]$. If $|f^{\prime \prime }|^{q}$ is a
quasiconvex on $[a,b]$, for $q\geq 1$, then the following inequality holds%
\begin{equation*}
\left\vert \frac{f(a)+f(b)}{2}-\frac{1}{b-a}\int_{a}^{b}f\left( x\right)
dx\right\vert \leq \frac{\left( b-a\right) ^{2}}{12}\left( \max \left\{
\left\vert f^{\prime \prime }\left( a\right) \right\vert ^{q},\left\vert
f^{\prime \prime }\left( b\right) \right\vert ^{q}\right\} \right) ^{\frac{1%
}{q}}.
\end{equation*}
\end{theorem}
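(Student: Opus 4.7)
The plan is to invoke Lemma \ref{l1} to rewrite the left-hand side of the claimed inequality in the form
$$\left|\frac{f(a)+f(b)}{2}-\frac{1}{b-a}\int_a^b f(x)\,dx\right| \leq \frac{(b-a)^2}{2}\int_0^1 t(1-t)\,|f''(ta+(1-t)b)|\,dt,$$
after moving the absolute value inside using the triangle inequality. Everything then reduces to bounding the weighted integral $\int_0^1 t(1-t)|f''(ta+(1-t)b)|\,dt$ by $\tfrac{1}{6}\bigl(\max\{|f''(a)|^q,|f''(b)|^q\}\bigr)^{1/q}$.

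For that, I would treat the case $q=1$ separately (where quasi-convexity of $|f''|$ is applied directly and the result follows from $\int_0^1 t(1-t)\,dt=\tfrac{1}{6}$), and for $q>1$ apply a power-mean / H\"older split of the integrand as
$$t(1-t)\,|f''(ta+(1-t)b)| = [t(1-t)]^{1-1/q}\cdot[t(1-t)]^{1/q}|f''(ta+(1-t)b)|,$$
with conjugate exponents $\tfrac{q}{q-1}$ and $q$. This yields the bound
$$\left(\int_0^1 t(1-t)\,dt\right)^{1-1/q}\!\left(\int_0^1 t(1-t)\,|f''(ta+(1-t)b)|^q\,dt\right)^{1/q}.$$

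The quasi-convexity of $|f''|^q$ on $[a,b]$ then gives $|f''(ta+(1-t)b)|^q\leq \max\{|f''(a)|^q,|f''(b)|^q\}$ for every $t\in[0,1]$, so the maximum factors out of the second integral, and both integrals evaluate against the same weight $t(1-t)$ whose integral on $[0,1]$ is $\tfrac{1}{6}$. The exponents $1-\tfrac{1}{q}$ and $\tfrac{1}{q}$ add to $1$, so the two $\tfrac{1}{6}$'s combine to a single $\tfrac{1}{6}$, and multiplying by the prefactor $\tfrac{(b-a)^2}{2}$ produces $\tfrac{(b-a)^2}{12}$ as required.

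The only step that needs genuine attention is the bookkeeping of the H\"older split: one must choose the split so that $t(1-t)$ carries total weight $1$ in both factors, otherwise a $q$-dependent constant survives and the clean $\tfrac{1}{12}$ is lost. Beyond this, no analytic subtlety intervenes; the identity from Lemma \ref{l1}, H\"older's inequality, and the defining property of quasi-convexity combine in a standard manner, and the hypothesis that $f''$ is integrable on $[a,b]$ is exactly what justifies applying Lemma \ref{l1} and taking absolute values under the integral sign.
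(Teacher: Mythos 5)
Your proposal is correct and uses exactly the technique the paper employs for its own quasi-$\varphi$-convex analogue of this theorem: the identity of Lemma \ref{l1}, followed by the power-mean (H\"older) split of the weight $t(1-t)$ into $[t(1-t)]^{1-1/q}[t(1-t)]^{1/q}$ with conjugate exponents $q/(q-1)$ and $q$, quasi-convexity to pull out the maximum, and $\int_0^1 t(1-t)\,dt=\tfrac16$. Your explicit handling of the $q=1$ case and your remark about keeping total weight $1$ on $t(1-t)$ in both H\"older factors are exactly the right points of care; no gap.
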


\section{Preliminaries}

Let $f,\varphi :K\rightarrow 
\mathbb{R}
$, where $K$ is a nonempty closed set in $%
\mathbb{R}
^{n}$, be continuous functions. First of all, we recall the following well
known results and concepts, which are mainly due to Noor and Noor \cite%
{Noor1}\ and Noor \cite{Noor5} as follows:

\begin{definition}
\label{d1} Let $u,v\in K$. Then the set $K$ is said to be $\varphi -convex$
at $u$ with respect to $\varphi $, if%
\begin{equation*}
u+te^{i\varphi }\left( v-u\right) \in K,\text{ }\forall u,v\in K,\text{ }%
t\in \left[ 0,1\right] .
\end{equation*}

\begin{remark}
\label{r1} We would like to mention that Definition \ref{d1} of a $\varphi $%
-convex set has a clear geometric interpretation. This definition
essentially says that there is a path starting from a point $u$ which is
contained in $K$. We do not require that the point $v$ should be one of the
end points of the path. This observation plays an important role in our
analysis. Note that, if we demand that $v$ should be an end point of the
path for every pair of points, $u,v\in K$, then $e^{i\varphi }\left(
v-u\right) =v-u$ if and only if, $\varphi =0$, and consequently $\varphi $%
-convexity reduces to convexity. Thus, it is true that every convex set is
also an $\varphi $-convex set, but the converse is not necessarily true, see 
\cite{Noor1}-\cite{Noor5} and the references therein.
\end{remark}
\end{definition}

\begin{definition}
\label{d2} The function $f$ on the $\varphi $-convex set $K$ is said to be $%
\varphi $-convex with respect to $\varphi $, if%
\begin{equation*}
f\left( u+te^{i\varphi }\left( v-u\right) \right) \leq \left( 1-t\right)
f\left( u\right) +tf\left( v\right) ,\text{ }\forall u,v\in K,\text{ }t\in %
\left[ 0,1\right] .
\end{equation*}%
The function $f$ is said to be $\varphi $-concave if and only if $-f$ is $%
\varphi $-convex. Note that every convex function is a $\varphi $-convex
function, but the converse is not true.
\end{definition}

\begin{definition}
\label{d3} The function $f$ on the $\varphi $-convex set $K$ is said to be
logarithmic $\varphi $-convex with respect to $\varphi $, such that%
\begin{equation*}
f\left( u+te^{i\varphi }\left( v-u\right) \right) \leq \left( f\left(
u\right) \right) ^{1-t}\left( f\left( v\right) \right) ^{t},\text{ }u,v\in K,%
\text{ }t\in \left[ 0,1\right]
\end{equation*}%
where $f\left( .\right) >0$.
\end{definition}

Now we define a new definition for quasi-$\varphi $-convex functions as
follows:

\begin{definition}
\label{d4} The function $f$ on the quasi $\varphi $-convex set $K$ is said
to be quasi $\varphi $-convex with respect to $\varphi $, if%
\begin{equation*}
f\left( u+te^{i\varphi }\left( v-u\right) \right) \leq \max \left\{ f\left(
u\right) ,f\left( v\right) \right\} .
\end{equation*}
\end{definition}

From the above definitions, we have%
\begin{eqnarray*}
f\left( u+te^{i\varphi }\left( v-u\right) \right) &\leq &\left( f\left(
u\right) \right) ^{1-t}\left( f\left( v\right) \right) ^{t} \\
&\leq &\left( 1-t\right) f\left( u\right) +tf\left( v\right) \\
&\leq &\max \left\{ f\left( u\right) ,f\left( v\right) \right\} .
\end{eqnarray*}%
Clearly, any $\varphi $-convex function is a quasi $\varphi $-convex
function. Furthermore, there exist quasi $\varphi $-convex functions which
are neither $\varphi $-convex nor continuous. For example, for%
\begin{equation*}
\varphi (v,u)=\left\{ 
\begin{array}{c}
2k\pi ,\text{ \ \ \ \ \ }u.v\geq 0,\ k\in \mathbb{Z} \\ 
k\pi ,\text{ \ \ \ \ \ \ }u.v<0,\ k\in \mathbb{Z}%
\end{array}%
\right.
\end{equation*}
the floor function $f_{loor}(x)=\left\lfloor x\right\rfloor ,$ is the
largest integer not greater than $x$, is an example of a monotonic
increasing function which is quasi $\varphi $-convex but it is neither $%
\varphi $-convex nor continuous.

In \cite{Noor3},\ Noor proved the Hermite-Hadamard inequality for the $%
\varphi -$convex functions as follows:

\begin{theorem}
\label{tt1} Let $f:K=\left[ a,a+e^{i\varphi }\left( b-a\right) \right]
\rightarrow \left( 0,\infty \right) $ be a $\varphi $-convex\ function on
the interval of real numbers $K^{0}$ (the interior of $K$) and $a,b\in K^{0}$
with $a<a+e^{i\varphi }\left( b-a\right) $ and $0\leq \varphi \leq \frac{\pi 
}{2}$. Then the following inequality holds:%
\begin{eqnarray}
f\left( \frac{2a+e^{i\varphi }\left( b-a\right) }{2}\right) &\leq &\frac{1}{%
e^{i\varphi }\left( b-a\right) }\dint\limits_{a}^{a+e^{i\varphi }\left(
b-a\right) }f\left( x\right) dx  \label{2} \\
&\leq &\frac{f\left( a\right) +f\left( a+e^{i\varphi }\left( b-a\right)
\right) }{2}\leq \frac{f\left( a\right) +f\left( b\right) }{2}.  \notag
\end{eqnarray}
\end{theorem}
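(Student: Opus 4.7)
The plan is to adapt the classical proof of the Hermite--Hadamard inequality to the $\varphi$-convex setting, performing a substitution that parametrizes the ``interval'' $K=[a,a+e^{i\varphi}(b-a)]$ by $t\in[0,1]$, and then invoking Definition~\ref{d2} three times, each in a slightly different way. Concretely, I will first change variables via $x=a+te^{i\varphi}(b-a)$, $dx=e^{i\varphi}(b-a)\,dt$, which gives
\begin{equation*}
\frac{1}{e^{i\varphi}(b-a)}\int_{a}^{a+e^{i\varphi}(b-a)}f(x)\,dx
=\int_{0}^{1}f\!\left(a+te^{i\varphi}(b-a)\right)dt.
\end{equation*}
All three inequalities will then be statements about this parameterized integral.

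For the leftmost (midpoint) inequality, I plan to write
$\tfrac{2a+e^{i\varphi}(b-a)}{2}$ as a balanced $\varphi$-combination of the two symmetric path points $a+te^{i\varphi}(b-a)$ and $a+(1-t)e^{i\varphi}(b-a)$, apply Definition~\ref{d2} with parameter $\tfrac{1}{2}$ to obtain
\begin{equation*}
f\!\left(\tfrac{2a+e^{i\varphi}(b-a)}{2}\right)\le\tfrac{1}{2}\!\left[f(a+te^{i\varphi}(b-a))+f(a+(1-t)e^{i\varphi}(b-a))\right],
\end{equation*}
and then integrate over $t\in[0,1]$; the two resulting integrals coincide by the symmetry $t\mapsto 1-t$, giving the asserted bound by the running integral. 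For the middle inequality I will apply Definition~\ref{d2} with endpoints $a$ and $a+e^{i\varphi}(b-a)$ along the path, yielding
\begin{equation*}
f\!\left(a+te^{i\varphi}(b-a)\right)\le (1-t)f(a)+tf\!\left(a+e^{i\varphi}(b-a)\right),
\end{equation*}
and integrating over $t\in[0,1]$ produces exactly $\tfrac{f(a)+f(a+e^{i\varphi}(b-a))}{2}$. Finally, the rightmost inequality follows by setting $t=1$ in the $\varphi$-convexity inequality applied with $u=a$, $v=b$, which gives $f(a+e^{i\varphi}(b-a))\le f(b)$ and hence the stated chain.

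The main conceptual obstacle is the careful handling of the factor $e^{i\varphi}$ so that each application of Definition~\ref{d2} is at points that genuinely lie in the $\varphi$-convex set $K$ with the correct parameter $t$; in particular, for the midpoint step one must justify that the convex combination $\tfrac{1}{2}$ of two symmetric path points is obtained by a valid $\varphi$-combination, using the geometric interpretation in Remark~\ref{r1}. Once this parametric consistency is in place, each of the three inequalities reduces to a straightforward integration against a linear (or constant) function of $t$ over $[0,1]$, and no further technical machinery beyond the positivity assumption $f>0$ (needed only so that the right-hand inequalities are well-posed) and the bound $f(a+e^{i\varphi}(b-a))\le f(b)$ is required.
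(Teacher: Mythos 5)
The paper does not actually prove the first two inequalities in (\ref{2}); it imports them from Noor \cite{Noor3} and only observes that the final comparison follows from $f(a+e^{i\varphi }(b-a))\leq f(b)$, which is Definition \ref{d2} with $u=a$, $v=b$, $t=1$. Your handling of that last step is exactly the paper's. The difficulty lies in the two steps you do try to prove. For the middle inequality you need the pointwise bound $f(a+te^{i\varphi }(b-a))\leq (1-t)f(a)+tf(a+e^{i\varphi }(b-a))$, but Definition \ref{d2} applied with $u=a$ and $v=a+e^{i\varphi }(b-a)$ controls $f\left( u+te^{i\varphi }(v-u)\right) =f\left( a+te^{2i\varphi }(b-a)\right)$, not $f(a+te^{i\varphi }(b-a))$: an extra factor $e^{i\varphi }$ appears and does not cancel unless $\varphi =0$. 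The same defect undoes the midpoint step: taking $u=a+te^{i\varphi }(b-a)$ and $v=a+(1-t)e^{i\varphi }(b-a)$, the point that Definition \ref{d2} lets you bound at parameter $\tfrac{1}{2}$ is $u+\tfrac{1}{2}e^{i\varphi }(v-u)=a+te^{i\varphi }(b-a)+\tfrac{1}{2}(1-2t)e^{2i\varphi }(b-a)$, which is not the arithmetic midpoint $\tfrac{2a+e^{i\varphi }(b-a)}{2}$ appearing in the statement.

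You explicitly flag this as ``the main conceptual obstacle'' and defer it to Remark \ref{r1}, but that remark only says the path issuing from $u$ stays inside $K$; it contains no identity converting an arithmetic convex combination of path points into a $\varphi $-combination. Closing the gap would require an additional compatibility hypothesis (the analogue of ``Condition C'' in the preinvexity literature), which here forces $e^{2i\varphi }=e^{i\varphi }$, i.e.\ $\varphi =0$ on the range $0\leq \varphi \leq \tfrac{\pi }{2}$ --- and neither the theorem nor the paper assumes anything of the sort. So, apart from the final inequality, the proposal does not yield a proof from the paper's definitions as stated; within this paper the first two inequalities rest on the citation of \cite{Noor3}, not on an argument you can reconstruct from Definition \ref{d2} alone.
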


This inequality can easily show that using the $\varphi $-convex\ function's
definition and $f\left( a+e^{i\varphi }\left( b-a\right) \right) <f\left(
b\right) .$

In \cite{sarikaya6} and \cite{sarikaya7}, the authors proved some
generalization inequalities connected with Hermite-Hadamard's inequality for
diferentiable $\varphi $-convex functions.

In this article, using functions whose second derivatives absolute values
are $\varphi $-convex, log-$\varphi $-convex and quasi-$\varphi $-convex, we
obtained new inequalities related to the right side of Hermite-Hadamard
inequality given with (\ref{2}).

\section{Hermite-Hadamard Type Inequalities}

We will start the following theorem:

\begin{theorem}
Let $K\subset 
\mathbb{R}
$ be an open interval, $a,a+e^{i\varphi }(b-a)\in K$ with $a<b$ and $f:K=%
\left[ a,a+e^{i\varphi }(b-a)\right] \rightarrow (0,\infty )$ a twice
differentiable mapping such that $f^{\prime \prime }$ is integrable and $%
0\leq \varphi \leq \frac{\pi }{2}$. If $\left\vert f^{\prime \prime
}\right\vert $ is $\varphi $-convex function on $\left[ a,a+e^{i\varphi
}(b-a)\right] $. Then, the following inequality holds:%
\begin{eqnarray*}
&&\left\vert \frac{1}{e^{i\varphi }(b-a)}\int_{a}^{a+e^{i\varphi
}(b-a)}f(x)dx-\frac{f(a)+f(a+e^{i\varphi }(b-a))}{2}\right\vert \\
&\leq &\frac{e^{2i\varphi }(b-a)^{2}}{24}\left[ \left\vert f^{\prime \prime
}(a)\right\vert +\left\vert f^{\prime \prime }(b)\right\vert \right] .
\end{eqnarray*}
\end{theorem}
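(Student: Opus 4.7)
The plan is to apply Lemma \ref{l1} directly, with the right-hand endpoint $b$ replaced by $a+e^{i\varphi}(b-a)$, and then use the $\varphi$-convexity hypothesis on $|f''|$ to bound the resulting integrand.

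First I would substitute the endpoints $a$ and $a+e^{i\varphi}(b-a)$ into the identity of Lemma \ref{l1}. Writing $b_\varphi := a + e^{i\varphi}(b-a)$, so that $b_\varphi - a = e^{i\varphi}(b-a)$ and $(b_\varphi - a)^2 = e^{2i\varphi}(b-a)^2$, the lemma (read with a minus sign on the left, as is standard for the Hermite--Hadamard trapezoidal identity) yields
\begin{equation*}
\frac{f(a)+f(b_\varphi)}{2}-\frac{1}{e^{i\varphi}(b-a)}\int_a^{b_\varphi} f(x)\,dx = \frac{e^{2i\varphi}(b-a)^2}{2}\int_0^1 t(1-t)\,f''\bigl(ta+(1-t)b_\varphi\bigr)\,dt.
\end{equation*}
A direct rewriting of the parameter in the integrand gives $ta+(1-t)b_\varphi = a + (1-t)e^{i\varphi}(b-a)$, which is exactly a point of the form entering Definition \ref{d2}.

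Next I would take absolute values on both sides and pull the modulus inside the integral, using $|e^{2i\varphi}|=1$ (this is the reason the factor $e^{2i\varphi}(b-a)^2$ may be left ``as is'' in the statement, since its modulus is $(b-a)^2$). The $\varphi$-convexity of $|f''|$ applied with $u=a$, $v=b$ and parameter $s=1-t$ gives
\begin{equation*}
\bigl|f''\bigl(a+(1-t)e^{i\varphi}(b-a)\bigr)\bigr| \leq t|f''(a)|+(1-t)|f''(b)|.
\end{equation*}
Inserting this bound produces
\begin{equation*}
\left|\frac{f(a)+f(b_\varphi)}{2}-\frac{1}{e^{i\varphi}(b-a)}\int_a^{b_\varphi} f(x)\,dx\right| \leq \frac{(b-a)^2}{2}\int_0^1 t(1-t)\bigl[t|f''(a)|+(1-t)|f''(b)|\bigr]\,dt.
\end{equation*}

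Finally, the two elementary moment integrals $\int_0^1 t^2(1-t)\,dt = \int_0^1 t(1-t)^2\,dt = \tfrac{1}{12}$ combine to give a coefficient $\tfrac{1}{24}$ in front of $|f''(a)|+|f''(b)|$, matching the claimed bound. The only conceptual point requiring care is the interpretation of the ``integral along the complex segment'' in Lemma \ref{l1} when its right endpoint is shifted by $e^{i\varphi}$; this is routine after the affine change of variables $x=a+se^{i\varphi}(b-a)$, which makes both sides of the identity reduce to ordinary integrals over $s\in[0,1]$, and I do not expect any genuine obstacle beyond this bookkeeping.
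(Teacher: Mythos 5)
Your proposal is correct and follows essentially the same route as the paper: the identity you obtain by substituting $b_\varphi=a+e^{i\varphi}(b-a)$ into Lemma \ref{l1} is exactly the paper's key equality (3.1) (derived there by integrating by parts twice), after which both arguments apply the $\varphi$-convexity of $\left\vert f^{\prime\prime}\right\vert$ pointwise and evaluate the two moment integrals $\int_0^1 t^2(1-t)\,dt=\int_0^1 t(1-t)^2\,dt=\tfrac{1}{12}$ to obtain the constant $\tfrac{1}{24}$. Your reparametrization by $1-t$ and your explicit remark that $\vert e^{2i\varphi}\vert=1$ are immaterial differences (the latter is in fact slightly more careful than the paper, which leaves the complex factor $e^{2i\varphi}$ in the bound).
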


\begin{proof}
If the partial integration method is applied twice, then it follows that%
\begin{eqnarray}
&&\frac{e^{2i\varphi }(b-a)^{2}}{2}\int_{0}^{1}(t-t^{2})f^{\prime \prime
}(a+te^{i\varphi }(b-a))dt  \label{11} \\
&&  \notag \\
&=&\frac{1}{e^{i\varphi }(b-a)}\int_{a}^{a+e^{i\varphi }(b-a)}f(x)dx-\frac{%
f(a)+f(a+e^{i\varphi }(b-a))}{2}.  \notag
\end{eqnarray}%
Thus, by $\varphi $-convexity\ function of \ $\left\vert f^{\prime \prime
}\right\vert $, we have%
\begin{eqnarray*}
&&\left\vert \frac{1}{e^{i\varphi }(b-a)}\int_{a}^{a+e^{i\varphi
}(b-a)}f(x)dx-\frac{f(a)+f(a+e^{i\varphi }(b-a))}{2}\right\vert \\
&\leq &\frac{e^{2i\varphi }(b-a)^{2}}{2}\left\vert
\int_{0}^{1}(t-t^{2})f^{\prime \prime }(a+te^{i\varphi }(b-a))dt\right\vert
\\
&\leq &\frac{e^{2i\varphi }(b-a)^{2}}{2}\int_{0}^{1}(t-t^{2})\left[
(1-t)\left\vert f^{\prime \prime }(a)\right\vert +t\left\vert f^{\prime
\prime }(b)\right\vert \right] dt \\
&\leq &\frac{e^{2i\varphi }(b-a)^{2}}{24}\left[ \left\vert f^{\prime \prime
}(a)\right\vert +\left\vert f^{\prime \prime }(b)\right\vert \right]
\end{eqnarray*}

which the proof is completed.
\end{proof}

\begin{theorem}
Let $f:K=\left[ a,a+e^{i\varphi }(b-a)\right] \rightarrow (0,\infty )$ be a
twice differentiable mapping on $K^{0}$ and $f^{\prime \prime }$ be
integrable on $\left[ a,a+e^{i\varphi }(b-a)\right] $. Assume $p\in 
\mathbb{R}
$ with $p>1$. If $\left\vert f^{\prime \prime }\right\vert ^{p/p-1}$ is $%
\varphi $-convex function on the interval of real numbers $K^{0}$ (the
interior of $K$) and $a,b\in K^{0}$ with $a<$ $a+e^{i\varphi }(b-a)$ and $%
0\leq \varphi \leq \frac{\pi }{2}$. Then, the following inequality holds:%
\begin{eqnarray*}
&&\left\vert \frac{1}{e^{i\varphi }(b-a)}\int_{a}^{a+e^{i\varphi
}(b-a)}f(x)dx-\frac{f(a)+f(a+e^{i\varphi }(b-a))}{2}\right\vert \\
&& \\
&\leq &\frac{e^{2i\varphi }(b-a)^{2}}{8}\left( \frac{\sqrt{\pi }}{2}\right)
^{\frac{1}{p}}\left( \frac{\Gamma (p+1)}{\Gamma (\frac{3}{2}+p)}\right) ^{%
\frac{1}{p}}\left( \frac{\left\vert f^{\prime \prime }(a)\right\vert ^{\frac{%
p}{p-1}}+\left\vert f^{\prime \prime }(b)\right\vert ^{\frac{p}{p-1}}}{2}%
\right) ^{\frac{p-1}{p}}.
\end{eqnarray*}
\end{theorem}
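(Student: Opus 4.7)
The plan is to imitate the proof of the preceding theorem, but replace the direct $\varphi$-convexity estimate with H\"older's inequality. The starting point is the same integral identity already established there, namely
\begin{equation*}
\frac{1}{e^{i\varphi}(b-a)}\int_a^{a+e^{i\varphi}(b-a)} f(x)\,dx - \frac{f(a)+f(a+e^{i\varphi}(b-a))}{2}
= \frac{e^{2i\varphi}(b-a)^2}{2}\int_0^1 (t-t^2)f''(a+te^{i\varphi}(b-a))\,dt.
\end{equation*}
Taking absolute values and pulling them inside the integral reduces the problem to estimating $\int_0^1 (t-t^2)\,|f''(a+te^{i\varphi}(b-a))|\,dt$.

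Next I would apply H\"older's inequality with conjugate exponents $p$ and $q=p/(p-1)$ to factor the weight from the integrand:
\begin{equation*}
\int_0^1 (t-t^2)\,|f''(a+te^{i\varphi}(b-a))|\,dt
\leq \left(\int_0^1 (t-t^2)^p\,dt\right)^{1/p}\left(\int_0^1 |f''(a+te^{i\varphi}(b-a))|^{p/(p-1)}\,dt\right)^{(p-1)/p}.
\end{equation*}
The second factor is handled at once by the hypothesis: $\varphi$-convexity of $|f''|^{p/(p-1)}$ gives $|f''(a+te^{i\varphi}(b-a))|^{p/(p-1)} \leq (1-t)|f''(a)|^{p/(p-1)} + t|f''(b)|^{p/(p-1)}$, whose integral over $[0,1]$ equals $\tfrac{1}{2}\bigl(|f''(a)|^{p/(p-1)}+|f''(b)|^{p/(p-1)}\bigr)$.

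The computational heart of the proof — and the main obstacle — is recognizing the first factor as a Beta integral and rewriting it in the form that appears in the theorem. Specifically, $\int_0^1(t-t^2)^p\,dt = \int_0^1 t^p(1-t)^p\,dt = B(p+1,p+1) = \Gamma(p+1)^2/\Gamma(2p+2)$. Applying the Legendre duplication formula $\Gamma(2p+2)=\tfrac{2^{2p+1}}{\sqrt{\pi}}\Gamma(p+1)\Gamma(p+\tfrac{3}{2})$ collapses this to $\sqrt{\pi}\,\Gamma(p+1)/[2^{2p+1}\Gamma(p+\tfrac{3}{2})]$, and taking the $p$-th root produces the factor $\tfrac{1}{4}\bigl(\tfrac{\sqrt{\pi}}{2}\bigr)^{1/p}\bigl(\tfrac{\Gamma(p+1)}{\Gamma(\tfrac{3}{2}+p)}\bigr)^{1/p}$.

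Finally I would combine the three pieces: the prefactor $e^{2i\varphi}(b-a)^2/2$ from the identity, the $1/4$ arising from the duplication formula (giving the $1/8$ in the statement), the Beta-integral Gamma ratio, and the $\varphi$-convex estimate for the second H\"older factor. Matching constants yields exactly the stated bound, completing the proof. The only nontrivial step is the Gamma-function manipulation; everything else follows the template of the previous theorem mechanically.
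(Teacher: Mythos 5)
Your proposal is correct and follows essentially the same route as the paper: the identity (\ref{11}), H\"older's inequality with exponents $p$ and $p/(p-1)$, the $\varphi$-convexity bound for the second factor, and the evaluation $\int_0^1(t-t^2)^p\,dt=2^{-1-2p}\sqrt{\pi}\,\Gamma(p+1)/\Gamma(\tfrac{3}{2}+p)$. The only difference is that you actually derive this last identity via the Beta function and the Legendre duplication formula, whereas the paper simply asserts it.
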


\begin{proof}
By assumption, H\"{o}lder's inequality and (\ref{11}), we have%
\begin{eqnarray*}
&&\left\vert \frac{1}{e^{i\varphi }(b-a)}\int_{a}^{a+e^{i\varphi
}(b-a)}f(x)dx-\frac{f(a)+f(a+e^{i\varphi }(b-a))}{2}\right\vert \\
&\leq &\frac{e^{2i\varphi }(b-a)^{2}}{2}\int_{0}^{1}\left\vert
t-t^{2}\right\vert \left\vert f^{\prime \prime }(a+te^{i\varphi
}(b-a))\right\vert dt \\
&\leq &\frac{e^{2i\varphi }(b-a)^{2}}{2}\left(
\int_{0}^{1}(t-t^{2})^{p}dt\right) ^{\frac{1}{p}}\left(
\int_{0}^{1}\left\vert f^{\prime \prime }(a+te^{i\varphi }(b-a))\right\vert
^{\frac{p}{p-1}}dt\right) ^{\frac{p-1}{p}} \\
&\leq &\frac{e^{2i\varphi }(b-a)^{2}}{2}\left( \frac{2^{-1-2p}\sqrt{\pi }%
\Gamma (p+1)}{\Gamma (\frac{3}{2}+p)}\right) ^{\frac{1}{p}}\left(
\int_{0}^{1}\left[ (1-t)\left\vert f^{\prime \prime }(a)\right\vert ^{\frac{p%
}{p-1}}+t\left\vert f^{\prime \prime }(b)\right\vert ^{\frac{p}{p-1}}\right]
dt\right) ^{\frac{p-1}{p}} \\
&=&\frac{e^{2i\varphi }(b-a)^{2}}{8}\left( \frac{\sqrt{\pi }}{2}\right) ^{%
\frac{1}{p}}\left( \frac{\Gamma (p+1)}{\Gamma (\frac{3}{2}+p)}\right) ^{%
\frac{1}{p}}\left( \frac{\left\vert f^{\prime \prime }(a)\right\vert ^{\frac{%
p}{p-1}}+\left\vert f^{\prime \prime }(b)\right\vert ^{\frac{p}{p-1}}}{2}%
\right) ^{\frac{p-1}{p}}
\end{eqnarray*}

where we use the fact that%
\begin{equation*}
\int_{0}^{1}(t-t^{2})^{p}dt=\frac{2^{-1-2p}\sqrt{\pi }\Gamma (p+1)}{\Gamma (%
\frac{3}{2}+p)}
\end{equation*}

which completes the proof.
\end{proof}

Let us denote by $A(a,b)$ the arithmetic mean of the nonnegative real
numbers, and by $L(a,b)$ the logaritmic mean of the same numbers.

\begin{theorem}
\label{as} Let $K\subset 
\mathbb{R}
$ be an open interval, $a,a+e^{i\varphi }(b-a)\in K$ with $a<b$ and $f:K=%
\left[ a,a+e^{i\varphi }(b-a)\right] \rightarrow (0,\infty )$ a twice
differentiable mapping such that $f^{\prime \prime }$ is integrable and $%
0\leq \varphi \leq \frac{\pi }{2}$. If $\left\vert f^{\prime \prime
}\right\vert $ is log $\varphi $-convex function on $\left[ a,a+e^{i\varphi
}(b-a)\right] $. Then, the following inequality holds:%
\begin{eqnarray*}
&&\left\vert \frac{1}{e^{i\varphi }(b-a)}\int_{a}^{a+e^{i\varphi
}(b-a)}f(x)dx-\frac{f(a)+f(a+e^{i\varphi }(b-a))}{2}\right\vert \\
&\leq &\left( \frac{e^{i\varphi }(b-a)}{\log \left\vert f^{\prime \prime
}(b)\right\vert -\log \left\vert f^{\prime \prime }(a)\right\vert }\right)
^{2}\left[ A\left( \left\vert f^{\prime \prime }(b)\right\vert ,\left\vert
f^{\prime \prime }(a)\right\vert \right) -L\left( \left\vert f^{\prime
\prime }(b)\right\vert ,\left\vert f^{\prime \prime }(a)\right\vert \right) %
\right] .
\end{eqnarray*}
\end{theorem}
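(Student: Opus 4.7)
The plan is to start from the identity already recorded in the previous proofs,
\begin{equation*}
\frac{e^{2i\varphi}(b-a)^{2}}{2}\int_{0}^{1}(t-t^{2})f''(a+te^{i\varphi}(b-a))\,dt
=\frac{1}{e^{i\varphi}(b-a)}\int_{a}^{a+e^{i\varphi}(b-a)}f(x)\,dx-\frac{f(a)+f(a+e^{i\varphi}(b-a))}{2},
\end{equation*}
take absolute values on the right-hand side, and push them inside the integral. Since $|f''|$ is log-$\varphi$-convex by hypothesis, Definition \ref{d3} applied to $u=a$, $v=b$ yields the pointwise bound $|f''(a+te^{i\varphi}(b-a))|\le|f''(a)|^{1-t}|f''(b)|^{t}$. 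Combining with $t-t^{2}\ge 0$ on $[0,1]$ reduces the problem to estimating
\begin{equation*}
\frac{e^{2i\varphi}(b-a)^{2}}{2}\int_{0}^{1}(t-t^{2})\,|f''(a)|^{1-t}|f''(b)|^{t}\,dt.
\end{equation*}

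The next step is to evaluate this integral in closed form. Writing $A=|f''(a)|$, $B=|f''(b)|$, and $c=\log B-\log A$, one has $A^{1-t}B^{t}=Ae^{ct}$, so I would compute $\int_{0}^{1}te^{ct}\,dt$ and $\int_{0}^{1}t^{2}e^{ct}\,dt$ by integration by parts and subtract. A direct calculation gives
\begin{equation*}
\int_{0}^{1}(t-t^{2})e^{ct}\,dt=\frac{(c-2)e^{c}+c+2}{c^{3}}.
\end{equation*}
Multiplying by $A$ and using $Ae^{c}=B$ collapses this to $\frac{c(A+B)-2(B-A)}{c^{3}}=\frac{A+B}{c^{2}}-\frac{2(B-A)}{c^{3}}$.

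Finally, I would identify the means: $\frac{A+B}{2}=A(B,A)$ and $\frac{B-A}{\log B-\log A}=L(B,A)$, so the expression equals $\frac{2}{c^{2}}\bigl[A(B,A)-L(B,A)\bigr]$. Reinserting the prefactor $\tfrac{1}{2}e^{2i\varphi}(b-a)^{2}$ produces exactly
\begin{equation*}
\left(\frac{e^{i\varphi}(b-a)}{\log|f''(b)|-\log|f''(a)|}\right)^{2}\bigl[A(|f''(b)|,|f''(a)|)-L(|f''(b)|,|f''(a)|)\bigr],
\end{equation*}
which is the claimed bound. The only non-routine aspect is the algebraic simplification of the exponential integral into the difference $A-L$ of the two means; the remaining bookkeeping with $e^{i\varphi}$ factors is the same as in the preceding two theorems. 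A minor caveat worth flagging is the degenerate case $|f''(a)|=|f''(b)|$, where $c=0$ and the bound should be read as its limit $\tfrac{1}{12}e^{2i\varphi}(b-a)^{2}|f''(a)|$, recovering the log-convex estimate by continuity.
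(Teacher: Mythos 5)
Your proposal is correct and follows essentially the same route as the paper: the identity (3.1), the pointwise log-$\varphi$-convexity bound $|f''(a+te^{i\varphi}(b-a))|\le |f''(a)|^{1-t}|f''(b)|^{t}$, the closed-form evaluation of $\int_{0}^{1}(t-t^{2})|f''(a)|^{1-t}|f''(b)|^{t}\,dt$, and the identification of the result with $A-L$. Your explicit computation of the exponential integral and your remark on the degenerate case $|f''(a)|=|f''(b)|$ are in fact more careful than the paper's own presentation.
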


\begin{proof}
By using (\ref{11}) and log $\varphi $-convexity\ of \ $\left\vert f^{\prime
\prime }\right\vert $, we have 
\begin{eqnarray*}
&&\left\vert \frac{1}{e^{i\varphi }(b-a)}\int_{a}^{a+e^{i\varphi
}(b-a)}f(x)dx-\frac{f(a)+f(a+te^{i\varphi }(b-a))}{2}\right\vert \\
&\leq &\frac{e^{2i\varphi }(b-a)^{2}}{2}\int_{0}^{1}(t-t^{2})\left\vert
f^{\prime \prime }(a+te^{i\varphi }(b-a))\right\vert dt \\
&\leq &\frac{e^{2i\varphi }(b-a)^{2}}{2}\int_{0}^{1}(t-t^{2})\left(
\left\vert f^{\prime \prime }(a)\right\vert ^{1-t}\left\vert f^{\prime
\prime }(b)\right\vert ^{t}\right) dt \\
&=&\frac{e^{2i\varphi }(b-a)^{2}}{2}\left[ \frac{\left\vert f^{\prime \prime
}(b)\right\vert +\left\vert f^{\prime \prime }(a)\right\vert }{\left( \log
\left\vert f^{\prime \prime }(b)\right\vert -\log \left\vert f^{\prime
\prime }(a)\right\vert \right) ^{2}}-\frac{2\left( \left\vert f^{\prime
\prime }(b)\right\vert -\left\vert f^{\prime \prime }(a)\right\vert \right) 
}{\left( \log \left\vert f^{\prime \prime }(b)\right\vert -\log \left\vert
f^{\prime \prime }(a)\right\vert \right) ^{3}}\right] \\
&=&\left( \frac{e^{i\varphi }(b-a)}{\log \left\vert f^{\prime \prime
}(b)\right\vert -\log \left\vert f^{\prime \prime }(a)\right\vert }\right)
^{2}\left[ A\left( \left\vert f^{\prime \prime }(b)\right\vert ,\left\vert
f^{\prime \prime }(a)\right\vert \right) -L\left( \left\vert f^{\prime
\prime }(b)\right\vert ,\left\vert f^{\prime \prime }(a)\right\vert \right) %
\right] .
\end{eqnarray*}%
The proof of Theorem \ref{as} is completed.
\end{proof}

\begin{theorem}
Let $f:K=\left[ a,a+e^{i\varphi }(b-a)\right] \rightarrow (0,\infty )$ be a
twice differentiable mapping on $K^{o}$ and $f^{\prime \prime }$ be
integrable on $\left[ a,a+e^{i\varphi }(b-a)\right] $. Assume $p\in 
\mathbb{R}
$ with $p>1$. If $\left\vert f^{\prime \prime }\right\vert ^{p/p-1}$ is log $%
\varphi $-convex function on the interval of real numbers $K^{o}$ (the
interior of $K$) and $a,b\in K^{o}$ with $a<$ $a+e^{i\varphi }(b-a)$ and $%
0\leq \varphi \leq \frac{\pi }{2}$. Then, the following inequality holds:%
\begin{eqnarray*}
&&\left\vert \frac{1}{e^{i\varphi }(b-a)}\int_{a}^{a+e^{i\varphi
}(b-a)}f(x)dx-\frac{f(a)+f(a+e^{i\varphi }(b-a))}{2}\right\vert \\
&& \\
&\leq &\frac{e^{2i\varphi }(b-a)^{2}}{8}\left( \frac{\sqrt{\pi }}{2}\right)
^{\frac{1}{p}}\left( \frac{\Gamma (p+1)}{\Gamma (\frac{3}{2}+p)}\right) ^{%
\frac{1}{p}}\left( \frac{p-1}{p}\right) ^{\frac{p-1}{p}}\left( \frac{%
\left\vert f^{\prime \prime }(a)\right\vert ^{\frac{p}{p-1}}-\left\vert
f^{\prime \prime }(b)\right\vert ^{\frac{p}{p-1}}}{\log \left\vert f^{\prime
\prime }(b)\right\vert -\log \left\vert f^{\prime \prime }(a)\right\vert }%
\right) ^{\frac{p-1}{p}}.
\end{eqnarray*}
\end{theorem}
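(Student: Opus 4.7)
The plan is to mirror the structure of the previous two theorems, using identity (\ref{11}) as the starting point and then invoking H\"{o}lder's inequality, the same beta-integral evaluation, and log-$\varphi$-convexity in place of $\varphi$-convexity. Specifically, I would start from
\begin{equation*}
\left\vert \frac{1}{e^{i\varphi}(b-a)}\int_{a}^{a+e^{i\varphi}(b-a)}f(x)dx - \frac{f(a)+f(a+e^{i\varphi}(b-a))}{2}\right\vert \leq \frac{e^{2i\varphi}(b-a)^{2}}{2}\int_{0}^{1}(t-t^{2})\left\vert f^{\prime\prime}(a+te^{i\varphi}(b-a))\right\vert dt,
\end{equation*}
and then split the right-hand integrand as $(t-t^{2})\cdot\left\vert f^{\prime\prime}(a+te^{i\varphi}(b-a))\right\vert$ and apply H\"{o}lder with conjugate exponents $p$ and $p/(p-1)$.

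For the first resulting factor, I would reuse the identity $\int_{0}^{1}(t-t^{2})^{p}dt = 2^{-1-2p}\sqrt{\pi}\,\Gamma(p+1)/\Gamma(\tfrac{3}{2}+p)$ cited at the end of the previous proof, which combined with the leading $\frac{1}{2}$ produces exactly the factor $\frac{e^{2i\varphi}(b-a)^{2}}{8}\bigl(\tfrac{\sqrt{\pi}}{2}\bigr)^{1/p}\bigl(\Gamma(p+1)/\Gamma(\tfrac{3}{2}+p)\bigr)^{1/p}$.

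The genuinely new step is the second factor. Setting $q=p/(p-1)$, log-$\varphi$-convexity of $|f^{\prime\prime}|^{q}$ gives
\begin{equation*}
\left\vert f^{\prime\prime}(a+te^{i\varphi}(b-a))\right\vert^{q} \leq \left\vert f^{\prime\prime}(a)\right\vert^{q(1-t)}\left\vert f^{\prime\prime}(b)\right\vert^{qt}.
\end{equation*}
Writing $A=|f^{\prime\prime}(a)|^{q}$ and $B=|f^{\prime\prime}(b)|^{q}$, the integral $\int_{0}^{1}A^{1-t}B^{t}dt$ is an elementary exponential integral equal to $(B-A)/\log(B/A)$, which after extracting the factor $q$ from $\log(B/A)=q(\log|f^{\prime\prime}(b)|-\log|f^{\prime\prime}(a)|)$ becomes
\begin{equation*}
\int_{0}^{1}\left\vert f^{\prime\prime}(a+te^{i\varphi}(b-a))\right\vert^{q}dt \leq \frac{1}{q}\cdot\frac{|f^{\prime\prime}(b)|^{q}-|f^{\prime\prime}(a)|^{q}}{\log|f^{\prime\prime}(b)|-\log|f^{\prime\prime}(a)|}.
\end{equation*}
Raising to the $1/q=(p-1)/p$ power produces the factor $\bigl(\tfrac{p-1}{p}\bigr)^{(p-1)/p}$ together with the symmetric ratio $\bigl((|f^{\prime\prime}(a)|^{q}-|f^{\prime\prime}(b)|^{q})/(\log|f^{\prime\prime}(b)|-\log|f^{\prime\prime}(a)|)\bigr)^{(p-1)/p}$ appearing in the statement (the minus signs cancel in numerator and denominator).

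The main obstacle, modest as it is, is simply the bookkeeping: keeping the two appearances of $q$ (one inside the log-convexity bound and one from the conjugate exponent) consistent so that the factor $(p-1)/p$ is pulled out cleanly and the final ratio appears in exactly the asymmetric form printed in the theorem. A brief degenerate-case remark should also be appended for $|f^{\prime\prime}(a)|=|f^{\prime\prime}(b)|$, where the logarithmic-mean factor is interpreted as the common value, so that the bound remains meaningful.
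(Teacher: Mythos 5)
Your proposal follows the paper's proof exactly: identity (\ref{11}), H\"{o}lder with exponents $p$ and $p/(p-1)$, the same evaluation of $\int_{0}^{1}(t-t^{2})^{p}dt$, and the log-$\varphi$-convexity bound integrated to the logarithmic-mean expression, with your explicit extraction of the factor $1/q=(p-1)/p$ being precisely the step the paper performs implicitly in its final equality. One small caveat: as printed, the theorem's ratio reverses the sign of the numerator only (not of the denominator as well), so it is the negative of the quantity you correctly derive --- a typo in the source rather than a gap in your argument.
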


\begin{proof}
By using (\ref{11}) and the well known H\"{o}lder's integral inequality, we
obtain%
\begin{eqnarray*}
&&\left\vert \frac{1}{e^{i\varphi }(b-a)}\int_{a}^{a+e^{i\varphi
}(b-a)}f(x)dx-\frac{f(a)+f(a+e^{i\varphi }(b-a))}{2}\right\vert \\
&\leq &\frac{e^{2i\varphi }(b-a)^{2}}{2}\int_{0}^{1}(t-t^{2})\left\vert
f^{\prime \prime }(a+te^{i\varphi }(b-a))\right\vert dt \\
&\leq &\frac{e^{2i\varphi }(b-a)^{2}}{2}\left(
\int_{0}^{1}(t-t^{2})^{p}dt\right) ^{\frac{1}{p}}\left(
\int_{0}^{1}\left\vert f^{\prime \prime }(a+te^{i\varphi }(b-a))\right\vert
^{\frac{p}{p-1}}dt\right) ^{\frac{p-1}{p}} \\
&\leq &\frac{e^{2i\varphi }(b-a)^{2}}{2}\left( \frac{2^{-1-2p}\sqrt{\pi }%
\Gamma (p+1)}{\Gamma (\frac{3}{2}+p)}\right) ^{\frac{1}{p}}\left(
\int_{0}^{1}\left\vert f^{\prime \prime }(a)\right\vert ^{\frac{p}{p-1}%
(1-t)}\left\vert f^{\prime \prime }(b)\right\vert ^{\frac{p}{p-1}t}dt\right)
^{\frac{p-1}{p}} \\
&=&\frac{e^{2i\varphi }(b-a)^{2}}{8}\left( \frac{\sqrt{\pi }}{2}\right) ^{%
\frac{1}{p}}\left( \frac{\Gamma (p+1)}{\Gamma (\frac{3}{2}+p)}\right) ^{%
\frac{1}{p}}\left( \frac{p-1}{p}\right) ^{\frac{p-1}{p}}\left( \frac{%
\left\vert f^{\prime \prime }(a)\right\vert ^{\frac{p}{p-1}}-\left\vert
f^{\prime \prime }(b)\right\vert ^{\frac{p}{p-1}}}{\log \left\vert f^{\prime
\prime }(b)\right\vert -\log \left\vert f^{\prime \prime }(a)\right\vert }%
\right) ^{\frac{p-1}{p}}.
\end{eqnarray*}
\end{proof}

\begin{theorem}
Let $f:K=\left[ a,a+e^{i\varphi }(b-a)\right] \rightarrow (0,\infty )$ be a
differentiable mapping on $K^{0}$ and $f^{\prime \prime }$ be integrable on $%
\left[ a,a+e^{i\varphi }(b-a)\right] $. If $\left\vert f^{\prime \prime
}\right\vert $ is a quasi $\varphi $-convex function on the interval of real
numbers $K^{o}$ (the interior of $K$) and $a,b\in K^{o}$ with $%
a<a+e^{i\varphi }(b-a)$ and $0\leq \varphi \leq \frac{\pi }{2}$. Then, the
following inequality holds:%
\begin{eqnarray*}
&&\left\vert \frac{1}{e^{i\varphi }(b-a)}\int_{a}^{a+e^{i\varphi
}(b-a)}f(x)dx-\frac{f(a)+f(a+te^{i\varphi }(b-a))}{2}\right\vert \\
&& \\
&\leq &\frac{e^{i\varphi }(b-a)}{4}\max \{\left\vert f^{\prime
}(a)\right\vert ,\left\vert f^{\prime }(b)\right\vert \}.
\end{eqnarray*}
\end{theorem}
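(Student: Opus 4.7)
Notice that the theorem as stated assumes $f$ is \emph{differentiable} (not twice differentiable), the right-hand side features first-derivative values $|f'(a)|, |f'(b)|$, and the factor $e^{i\varphi}(b-a)$ appears only to the first power; the hypotheses on $f''$ in the statement should therefore be read as the corresponding hypotheses on $f'$ (namely, $f'$ absolutely continuous and $|f'|$ quasi $\varphi$-convex), since the stated bound is not dimensionally compatible with the identity (\ref{11}). The plan is to replace (\ref{11}) by a one-integration-by-parts analogue involving $f'$, which parallels what Alomari--Darus--Dragomir do in the classical first-derivative quasiconvex setting.

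The first step is to establish the identity
\begin{equation*}
\frac{1}{e^{i\varphi}(b-a)}\int_{a}^{a+e^{i\varphi}(b-a)} f(x)\,dx - \frac{f(a)+f(a+e^{i\varphi}(b-a))}{2} = -\frac{e^{i\varphi}(b-a)}{2}\int_{0}^{1}(2t-1)\, f'(a+te^{i\varphi}(b-a))\,dt.
\end{equation*}
This is verified by a single integration by parts on the right-hand side, with $u = 2t-1$ and $dv = f'(a+te^{i\varphi}(b-a))\,dt$, so that $v = \frac{1}{e^{i\varphi}(b-a)} f(a+te^{i\varphi}(b-a))$. The boundary term produces the trapezoid $\frac{f(a)+f(a+e^{i\varphi}(b-a))}{e^{i\varphi}(b-a)}$, while $-\int v\,du$ produces the mean value of $f$ after the linear substitution $x = a+te^{i\varphi}(b-a)$.

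Taking absolute values on both sides and invoking quasi $\varphi$-convexity of $|f'|$ gives the pointwise bound $|f'(a+te^{i\varphi}(b-a))| \leq \max\{|f'(a)|, |f'(b)|\}$ uniformly in $t \in [0,1]$, which is $t$-independent and pulls out of the integral. Combined with the routine computation $\int_0^1 |2t-1|\,dt = \frac{1}{2}$, this delivers the constant $\frac{1}{2}\cdot\frac{1}{2} = \frac{1}{4}$ and hence the stated estimate $\frac{e^{i\varphi}(b-a)}{4}\max\{|f'(a)|,|f'(b)|\}$. The only real obstacle is interpretive rather than technical: one must recognize that the derivative indices in the hypotheses are off by one, since using (\ref{11}) literally with quasi $\varphi$-convexity of $|f''|$ would instead produce $\frac{e^{2i\varphi}(b-a)^{2}}{12}\max\{|f''(a)|,|f''(b)|\}$, which has the wrong dimensional form to match the conclusion as written.
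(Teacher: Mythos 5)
Your integration-by-parts identity is correct, and the computation $\int_{0}^{1}\left\vert 2t-1\right\vert dt=\tfrac{1}{2}$ does yield the constant $\tfrac{1}{4}$ for the first-derivative bound you prove. But you have resolved the mismatch between the theorem's hypotheses and its displayed conclusion in the opposite direction from the paper. The paper's own proof keeps the second-derivative hypotheses literally: it applies identity (\ref{11}), uses quasi $\varphi$-convexity of $\left\vert f^{\prime \prime }\right\vert$ to pull $\max \{\left\vert f^{\prime \prime }(a)\right\vert ,\left\vert f^{\prime \prime }(b)\right\vert \}$ out of $\int_{0}^{1}(t-t^{2})dt=\tfrac{1}{6}$, and lands on a bound of the form $\frac{e^{2i\varphi }(b-a)^{2}}{12}\max \{\left\vert f^{\prime \prime }(a)\right\vert ,\left\vert f^{\prime \prime }(b)\right\vert \}$ (the paper writes the denominator as $24$, an arithmetic slip, since $\tfrac{1}{2}\cdot \tfrac{1}{6}=\tfrac{1}{12}$). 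In other words, the route you dismiss in your final paragraph as producing the ``wrong dimensional form'' is precisely the paper's proof; it is the displayed right-hand side of the theorem (first derivatives, first power of $e^{i\varphi }(b-a)$) that is the typographical error, as one can see from the paper's title, from the two companion theorems that follow (both of which assume quasi $\varphi$-convexity of a power of $\left\vert f^{\prime \prime }\right\vert$ and conclude with second-derivative bounds), and from the classical quasiconvex second-derivative theorem of Alomari, Darus and Dragomir quoted in the introduction, of which this is the $\varphi$-analogue.

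So the situation is: you have given a correct proof of a different (also true) trapezoid inequality --- the one for $\left\vert f^{\prime }\right\vert$ quasi $\varphi$-convex, via a single integration by parts against the kernel $2t-1$ --- rather than of the theorem the paper intends and proves. Your argument is self-contained and the alternative identity is verified correctly, but as a reconstruction of this theorem's proof it misses the intended statement. The one genuinely useful byproduct of your analysis is that your value $\tfrac{1}{12}$ for the constant in the second-derivative bound corrects the paper's $\tfrac{1}{24}$.
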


\begin{proof}
By using (\ref{11}) and the quasi $\varphi $-convexity of \ $\left\vert
f^{\prime \prime }\right\vert $, we have%
\begin{eqnarray*}
&&\left\vert \frac{1}{e^{i\varphi }(b-a)}\int_{a}^{a+e^{i\varphi
}(b-a)}f(x)dx-\frac{f(a)+f(a+te^{i\varphi }(b-a))}{2}\right\vert \\
&\leq &\frac{e^{2i\varphi }(b-a)^{2}}{2}\int_{0}^{1}(t-t^{2})\left\vert
f^{\prime \prime }(a+te^{i\varphi }(b-a))\right\vert dt \\
&\leq &\frac{e^{2i\varphi }(b-a)^{2}}{2}\max \{\left\vert f^{\prime \prime
}(a)\right\vert ,\left\vert f^{\prime \prime }(b)\right\vert
\}\int_{0}^{1}(t-t^{2})dt \\
&\leq &\frac{e^{2i\varphi }(b-a)^{2}}{24}\max \{\left\vert f^{\prime \prime
}(a)\right\vert ,\left\vert f^{\prime \prime }(b)\right\vert \}.
\end{eqnarray*}
\end{proof}

\begin{theorem}
Let $f:K=\left[ a,a+e^{i\varphi }(b-a)\right] \rightarrow (0,\infty )$ be a
differentiable mapping on $K^{o}$and $f^{\prime \prime }$ be integrable on $%
\left[ a,a+e^{i\varphi }(b-a)\right] $. Assume $p\in 
\mathbb{R}
$ with $p>1$. If $\left\vert f^{\prime \prime }\right\vert ^{p/p-1}$ is a
quasi $\varphi $-convex function on the interval of real numbers $K^{o}$
(the interior of $K$) and $a,b\in K^{o}$ with $a<$ $a+e^{i\varphi }(b-a)$
and $0\leq \varphi \leq \frac{\pi }{2}$. Then, the following inequality
holds:%
\begin{eqnarray*}
&&\left\vert \frac{1}{e^{i\varphi }(b-a)}\int_{a}^{a+e^{i\varphi
}(b-a)}f(x)dx-\frac{f(a)+f(a+te^{i\varphi }(b-a))}{2}\right\vert \\
&\leq &\frac{e^{2i\varphi }(b-a)^{2}}{8}\left( \frac{\sqrt{\pi }}{2}\right)
^{\frac{1}{p}}\left( \frac{\Gamma (p+1)}{\Gamma (\frac{3}{2}+p)}\right) ^{%
\frac{1}{p}}\left[ \max \{\left\vert f^{\prime \prime }(a)\right\vert ^{%
\frac{p}{p-1}},\left\vert f^{\prime \prime }(b)\right\vert ^{\frac{p}{p-1}}\}%
\right] ^{\frac{p-1}{p}}.
\end{eqnarray*}
\end{theorem}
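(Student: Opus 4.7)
The plan is to follow the same template used in the previous theorems of the paper, combining the integral identity (\ref{11}) with H\"older's inequality and the quasi-$\varphi$-convexity hypothesis on $|f''|^{p/(p-1)}$. Nothing genuinely new is required: this is the $L^{p}$-version (via H\"older) of the preceding quasi-$\varphi$-convex theorem, exactly analogous to how Theorem 4 in the excerpt was turned into Theorem 5 for the convex case.

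First, I would start from identity (\ref{11}) and take absolute values, yielding
\begin{equation*}
\left|\tfrac{1}{e^{i\varphi}(b-a)}\!\!\int_{a}^{a+e^{i\varphi}(b-a)}\!\!f(x)\,dx-\tfrac{f(a)+f(a+e^{i\varphi}(b-a))}{2}\right|\leq \tfrac{e^{2i\varphi}(b-a)^{2}}{2}\!\int_{0}^{1}(t-t^{2})\bigl|f''(a+te^{i\varphi}(b-a))\bigr|\,dt,
\end{equation*}
using $t-t^{2}\geq 0$ on $[0,1]$ to drop the absolute value on that factor. Next I would apply H\"older's inequality with exponents $p$ and $p/(p-1)$ to split the integral as
\begin{equation*}
\int_{0}^{1}(t-t^{2})\bigl|f''(a+te^{i\varphi}(b-a))\bigr|\,dt\leq \left(\int_{0}^{1}(t-t^{2})^{p}\,dt\right)^{\!1/p}\left(\int_{0}^{1}\bigl|f''(a+te^{i\varphi}(b-a))\bigr|^{p/(p-1)}\,dt\right)^{\!(p-1)/p}.
\end{equation*}

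The first integral is handled by the Beta-function evaluation already used in the paper,
\begin{equation*}
\int_{0}^{1}(t-t^{2})^{p}\,dt=\frac{2^{-1-2p}\sqrt{\pi}\,\Gamma(p+1)}{\Gamma(\tfrac{3}{2}+p)},
\end{equation*}
so that $\bigl(\int_{0}^{1}(t-t^{2})^{p}dt\bigr)^{1/p}=\tfrac{1}{4}\bigl(\tfrac{\sqrt{\pi}}{2}\bigr)^{1/p}\bigl(\tfrac{\Gamma(p+1)}{\Gamma(3/2+p)}\bigr)^{1/p}$, which accounts for the constant $\tfrac{1}{8}$ (together with the prefactor $\tfrac{1}{2}$) and the $\Gamma$-factor in the stated bound. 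For the second factor I would invoke Definition \ref{d4}: since $|f''|^{p/(p-1)}$ is quasi-$\varphi$-convex on $K$, we have the pointwise estimate
\begin{equation*}
\bigl|f''(a+te^{i\varphi}(b-a))\bigr|^{p/(p-1)}\leq \max\bigl\{|f''(a)|^{p/(p-1)},|f''(b)|^{p/(p-1)}\bigr\}\qquad\text{for all }t\in[0,1],
\end{equation*}
so $\int_{0}^{1}|f''(a+te^{i\varphi}(b-a))|^{p/(p-1)}dt$ is bounded by the same constant and the $(p-1)/p$ power gives precisely the last factor in the claimed inequality.

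Assembling the three pieces yields the statement verbatim. There is no real obstacle; the only minor point to double-check is that the quasi-$\varphi$-convexity of $|f''|^{p/(p-1)}$ on $K$ is applied along the parametric segment $t\mapsto a+te^{i\varphi}(b-a)$, which is exactly the path built into Definition \ref{d1}, so the pointwise bound is legitimate and no subtlety with the complex factor $e^{i\varphi}$ arises beyond what has already been absorbed into the constants $e^{2i\varphi}(b-a)^{2}$ appearing on both sides.
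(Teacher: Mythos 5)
Your proposal is correct and follows essentially the same route as the paper's own proof: identity (\ref{11}), H\"older's inequality with exponents $p$ and $p/(p-1)$, the Beta-function evaluation of $\int_{0}^{1}(t-t^{2})^{p}\,dt$, and the pointwise quasi-$\varphi$-convexity bound on $|f''|^{p/(p-1)}$. In fact your write-up is cleaner, since the paper's displayed chain contains several typographical slips ($f'$ in place of $f''$, a swapped exponent $\tfrac{p}{p-1}$ versus $\tfrac{p-1}{p}$, and a dropped factor of $e^{i\varphi}(b-a)$ in two intermediate lines) that your version avoids.
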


\begin{proof}
By using (\ref{11}) and the well known H\"{o}lder's integral inequality, we
get%
\begin{eqnarray*}
&&\left\vert \frac{1}{e^{i\varphi }(b-a)}\int_{a}^{a+e^{i\varphi
}(b-a)}f(x)dx-\frac{f(a)+f(a+te^{i\varphi }(b-a))}{2}\right\vert \\
&\leq &\frac{e^{2i\varphi }(b-a)^{2}}{2}\int_{0}^{1}(t-t^{2})\left\vert
f^{\prime \prime }(a+te^{i\varphi }(b-a))\right\vert dt \\
&\leq &\frac{e^{i\varphi }(b-a)}{2}\left( \int_{0}^{1}(t-t^{2})^{p}dt\right)
^{\frac{1}{p}}\left( \int_{0}^{1}\left\vert f^{\prime }(a+te^{i\varphi
}(b-a))\right\vert ^{\frac{p-1}{p}}dt\right) ^{\frac{p}{p-1}} \\
&\leq &\frac{e^{i\varphi }(b-a)}{2}\left( \frac{2^{-1-2p}\sqrt{\pi }\Gamma
(p+1)}{\Gamma (\frac{3}{2}+p)}\right) ^{\frac{1}{p}}\left( \int_{0}^{1}\max
\{\left\vert f^{\prime }(a)\right\vert ^{\frac{p}{p-1}},\left\vert f^{\prime
}(b)\right\vert ^{\frac{p}{p-1}}\}dt\right) ^{\frac{p}{p-1}} \\
&\leq &\frac{e^{2i\varphi }(b-a)^{2}}{8}\left( \frac{\sqrt{\pi }}{2}\right)
^{\frac{1}{p}}\left( \frac{\Gamma (p+1)}{\Gamma (\frac{3}{2}+p)}\right) ^{%
\frac{1}{p}}\left[ \max \{\left\vert f^{\prime \prime }(a)\right\vert ^{%
\frac{p}{p-1}},\left\vert f^{\prime \prime }(b)\right\vert ^{\frac{p}{p-1}}\}%
\right] ^{\frac{p-1}{p}}.
\end{eqnarray*}
\end{proof}

\begin{theorem}
Let $f:K=\left[ a,a+e^{i\varphi }(b-a)\right] \rightarrow (0,\infty )$ be a
differentiable mapping on $K^{o}$and $f^{\prime \prime }$ be integrable on $%
\left[ a,a+e^{i\varphi }(b-a)\right] $. Assume $q\in 
\mathbb{R}
$ with $q\geq 1$. If $\left\vert f^{\prime \prime }\right\vert ^{q}$ is a
quasi $\varphi $-convex function on the interval of real numbers $K^{o}$
(the interior of $K$) and $a,b\in K^{o}$ with $a<$ $a+e^{i\varphi }(b-a)$
and $0\leq \varphi \leq \frac{\pi }{2}$. Then, the following inequality
holds:%
\begin{eqnarray*}
&&\left\vert \frac{1}{e^{i\varphi }(b-a)}\int_{a}^{a+e^{i\varphi
}(b-a)}f(x)dx-\frac{f(a)+f(a+te^{i\varphi }(b-a))}{2}\right\vert \\
&\leq &\frac{e^{2i\varphi }(b-a)^{2}}{12}\left[ \max \{\left\vert f^{\prime
\prime }(a)\right\vert ^{q},\left\vert f^{\prime \prime }(b)\right\vert
^{q}\}\right] ^{\frac{1}{q}}.
\end{eqnarray*}
\end{theorem}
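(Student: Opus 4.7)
The plan is to start from the integral identity (\ref{11}) already established in this section and combine it with the power-mean (weighted Jensen) inequality, which is the natural tool when $q \geq 1$ and which reduces smoothly to the first theorem of this section in the borderline case $q = 1$. After moving the absolute value inside the integral on the right-hand side of (\ref{11}), the task reduces to estimating
$$\int_{0}^{1} (t-t^{2}) \bigl| f''(a + te^{i\varphi}(b-a)) \bigr| \, dt.$$

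Next I would apply the power-mean inequality with the nonnegative weight $w(t) = t - t^{2}$, in the form
$$\int_{0}^{1} w(t) h(t) \, dt \leq \left( \int_{0}^{1} w(t) \, dt \right)^{1 - 1/q} \left( \int_{0}^{1} w(t) h(t)^{q} \, dt \right)^{1/q},$$
with $h(t) = |f''(a+te^{i\varphi}(b-a))|$. The quasi-$\varphi$-convexity hypothesis on $|f''|^{q}$ then supplies the pointwise bound
$$\bigl| f''(a+te^{i\varphi}(b-a)) \bigr|^{q} \leq \max\{|f''(a)|^{q}, |f''(b)|^{q}\}$$
uniformly in $t \in [0,1]$, so this maximum pulls out of the weighted integral as a constant.

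The only explicit computation required is $\int_{0}^{1}(t - t^{2})\,dt = 1/6$. This factor appears to the power $1 - 1/q$ in one place and to the power $1/q$ in the other, combining to $1/6$ overall; multiplying by the prefactor $e^{2i\varphi}(b-a)^{2}/2$ from the identity then produces the stated constant $1/12$. I do not anticipate any genuine obstacle here; the only subtlety is to use the power-mean inequality rather than a direct H\"older split, because that formulation treats the weight $(t-t^{2})$ symmetrically and ensures the estimate is consistent across the full range $q \geq 1$, recovering the first theorem of this section when $q = 1$.
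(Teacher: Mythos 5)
Your proposal is correct and takes essentially the same route as the paper: both start from identity (\ref{11}), apply the power-mean (weighted H\"{o}lder) inequality with weight $t-t^{2}$, use quasi-$\varphi$-convexity of $\left\vert f^{\prime\prime}\right\vert^{q}$ to pull the maximum out of the weighted integral, and combine $\left(\tfrac{1}{6}\right)^{1-1/q}\left(\tfrac{1}{6}\right)^{1/q}=\tfrac{1}{6}$ with the prefactor $\tfrac{e^{2i\varphi}(b-a)^{2}}{2}$ to obtain the constant $\tfrac{1}{12}$. Your write-up is in fact cleaner than the paper's, whose intermediate lines contain typographical slips ($f^{\prime}$ where $f^{\prime\prime}$ is meant and a dropped factor of $e^{i\varphi}(b-a)$) that your argument avoids.
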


\begin{proof}
By using (\ref{l1}) and the well known power mean integral inequality, we
have%
\begin{eqnarray*}
&&\left\vert \frac{1}{e^{i\varphi }(b-a)}\int_{a}^{a+e^{i\varphi
}(b-a)}f(x)dx-\frac{f(a)+f(a+te^{i\varphi }(b-a))}{2}\right\vert \\
&\leq &\frac{e^{2i\varphi }(b-a)^{2}}{2}\int_{0}^{1}(t-t^{2})\left\vert
f^{\prime \prime }(a+te^{i\varphi }(b-a))\right\vert dt \\
&\leq &\frac{e^{i\varphi }(b-a)}{2}\left( \int_{0}^{1}(t-t^{2})dt\right) ^{%
\frac{1}{p}}\left( \int_{0}^{1}(t-t^{2})\left\vert f^{\prime
}(a+te^{i\varphi }(b-a))\right\vert ^{q}dt\right) ^{\frac{1}{q}} \\
&\leq &\frac{e^{i\varphi }(b-a)}{2}\left( \frac{1}{6}\right) ^{\frac{1}{p}%
}\left( \max \{\left\vert f^{\prime }(a)\right\vert ^{q},\left\vert
f^{\prime }(b)\right\vert ^{q}\}\int_{0}^{1}(t-t^{2})dt\right) ^{\frac{1}{q}}
\\
&\leq &\frac{e^{2i\varphi }(b-a)^{2}}{12}\left[ \max \{\left\vert f^{\prime
\prime }(a)\right\vert ^{q},\left\vert f^{\prime \prime }(b)\right\vert
^{q}\}\right] ^{\frac{1}{q}},
\end{eqnarray*}%
where $\frac{1}{p}+\frac{1}{q}=1.$
\end{proof}

\end{document}